\newcommand{\version}{Ver.~0.0}
\newcommand{\setversion}[1]{\renewcommand{\version}{Ver.~{#1}}}
\theoremstyle{plain}
\newtheorem{theorem}{Theorem}
\newtheorem{proposition}[theorem]{Proposition}
\newtheorem{corollary}[theorem]{Corollary}
\newtheorem{lemma}[theorem]{Lemma}
\theoremstyle{definition}
\theoremstyle{remark}
\newtheorem{remark}[theorem]{\upshape Remark}
\numberwithin{equation}{section}
\numberwithin{theorem}{section}
\newcommand{\Z}{\mathbb{Z}}
\newcommand{\R}{\mathbb{R}}
\newcommand{\C}{\mathbb{C}}
\newcounter{thmenum}
\newenvironment{mynote}{\par\medskip\noindent\begin{math} \blacktriangleright \end{math}{\bfseries [Memorandom]}\ }
{\begin{math} \blacktriangleleft \end{math}\par\medskip}
\newcommand{\vectwo}[2]{{\renewcommand{\arraystretch}{.85}\Bigl(\begin{array}{@{\,}c@{\,}}{#1}\\ {#2}\end{array}\Bigr)}}
\newcommand{\mattwo}[4]{\Bigl(\begin{array}{@{\,}c@{\;\;}c@{\,}}{#1} & {#2} \\ {#3} & {#4} \end{array}\Bigr)}
\newlength{\lengthcup}
\newcommand{\trace}{\qopname\relax o{Tr}}
\newcommand{\sgn}{\qopname\relax o{sgn}}
\renewcommand{\Re}{\qopname\relax o{Re}}
\renewcommand{\Im}{\qopname\relax o{Im}}
\newcommand{\Image}{\qopname\relax o{Im}}
\newcommand{\transpose}[1]{\,{}^t{#1}\,}
\newcommand{\restrict}{\big|}
\newcommand{\Sym}{\mathop{\mathrm{Sym}}\nolimits}
\newcommand{\Str}{\mathop\mathrm{Str}\nolimits{}}
\newcommand{\calorbit}{\mathcal{O}}
\newcommand{\GL}{\mathrm{GL}}
\newcommand{\OO}{\mathrm{O}}
\newcommand{\Mat}{\mathrm{M}}
\newcommand{\regMat}{\mathrm{M}^{\circ}}
\newcommand{\la}{\langle}
\newcommand{\ra}{\rangle}
\newcommand{\GC}{G_{\C}}
\newcommand{\VV}{V}  
\newcommand{\EE}{E}  
\newcommand{\WW}{W}  
\newcommand{\LL}{L}  
\newcommand{\HH}{H}  
\newcommand{\GG}{G}  
\newcommand{\tildeOmega}{\widetilde{\Omega}}
\newcommand{\ztilde}{\widetilde{z}}
\newcommand{\wtilde}{\widetilde{w}}
\newcommand{\SchwarzSpace}{\mathscr{S}}
\newcommand{\Zetapcone}{Z_{\tildeOmega}{}}
\newcommand{\Kernelpcone}{K^+}
\newcommand{\Kernelrho}{K^{\rho}}
\newcommand{\delzy}{\partial_{z, y}}
\newcommand{\II}{\mathbb{I}}
\newcommand{\skipover}[1]{}
\title{Enhanced zeta distributions and its functional equations}
\author{Kyo Nishiyama}
\address{Department of Mathematics, 
Aoyama Gakuin University,
Fuchinobe 5-10-1, Sagamihara 252-5258, Japan}
\email{kyo@gem.aoyama.ac.jp}
\author{Bent {\O}rsted}
\address{Department of Mathematics, Aarhus University,
Ny Munkegade, 8000 Aarhus C, Denmark}
\email{orsted@math.au.dk}
\author{Akihito Wachi}
\address{Department of Mathematics, 
Hokkaido University of Education, 
Shiroyama 1, Kushiro 085-8580, Japan}
\email{wachi@kus.hokkyodai.ac.jp}
\begin{document}

\maketitle

\begin{abstract}
We consider an ``enhanced symmetric space'', which is a prehomogeneous vector space.  
This vector space is intimately related to a double flag variety studied in \cite{NO.2018}.  
On a distinguished open orbit called ``enhanced positive cone'', 
we consider a zeta integral with two complex variables, 
which is analytically continued to meromorphic family of tempered distributions.  
One of the main results of this paper is to establish a precise formula for the meromorphic continuation which clarifies the location of poles (and may be useful to obtain residues).  
We also compute the Fourier transform of the zeta distribution and obtain 
a functional equation with explicit gamma factors.
\end{abstract}




\section{Introduction}

Zeta distributions or zeta integrals are studied extensively by many authors.  
Most classical one is the Tate's zeta integral:
for a Schwarz function $ \varphi \in \SchwarzSpace(\R) $, 
\begin{equation*}
Z^{\mathrm{T}}(\varphi, s) = \int_{\R} \varphi(z) |z|^s dz \qquad (s \in \C).
\end{equation*}
The integral converges for $ \Re s > -1 $ and admits a meromorphic continuation to all the complex plane.  A generalization is the Godement-Jacquet zeta integral:
\begin{equation*}
Z^{\mathrm{GJ}}(\varphi, s) = \int_{\Mat_n(\R)} \varphi(z) |{\det z}|^s dz ,
\end{equation*}
with obvious notations.  
Also we can consider zeta integrals over the space of symmetric matrices $ \Sym_n(\R) $.  
In this case, we will consider the integral over the set of matrices with a fixed signature: 
$ \Omega(p, q) = \{ z \in \Sym_n(\R) \mid \sgn(z) = (p, q) \} \; (p + q = n) $.  
These zeta integrals are important in number theory 
since they produce zeta functions for lattices.

Around 1970's, by T.~Shintani and M.~Sato (\cite{Sato.Shintani.1974}), 
these zeta integrals are generalized to those for prehomogeneous vector spaces. 
Let $ \GC $ be a reductive algebraic group and assume 
that it linearly acts on a vector space $ V_{\C} $ \emph{with an open orbit} $ \calorbit $.  
In other words, the pair $ (\GC, V_{\C}) $ is a \emph{prehomogeneous vector space}, 
which is abbreviated to PV.  
For the properties of PV, readers may refer to \cite{Kimura.PV.book} or \cite{Sato.Shintani.1974, Sato.M.1970.PV}.  
To proceed further, 
we assume that 
$ S = V_{\C} \setminus \calorbit $ is an irreducible hypersurface which implies 
the existence of a unique fundamental relative invariant $ P(z) \in \C[V_{\C}] $ and 
$ S $ is defined by $ P(v) = 0 $.  

Let $ V_{\R} $ be a real form of $ V_{\C} $ on which 
$ G_{\R} $ acts.  
Then $ V_{\R} \setminus \{ z \in V_{\R} \mid P(z) = 0 \} $ breaks up into several 
open $ G_{\R} $ orbits:
\begin{equation*}
V_{\R} \setminus \{ z \in V_{\R} \mid P(z) = 0 \} = 
\textstyle\bigcup\nolimits_{i = 1}^{\ell} \calorbit_i .
\end{equation*}
On each $ \calorbit_i $, we define a zeta integral 
\begin{equation*}
Z_i^{(G,V)}(\varphi, s) = \int_{\calorbit_i} \varphi(z) |P(z)|^s dz 
\qquad
(\varphi \in \SchwarzSpace(V_{\R}), \; s \in \C), 
\end{equation*}
regarded as a tempered distribution.  
For these zeta integrals, 
Sato and Shintani proved a \emph{fundamental theorem} which states
(1) existence of a meromorphic continuation of the distribution in the parameter $ s \in \C $; 
(2) duality with respect to the Fourier transform, which is called \emph{functional equations}.  
This theorem is very powerful and investigated by many authors, but in practice, 
it needs a serious work to determine the meromorphic continuation 
(e.g., to locate poles or to obtain residues) and 
to deduce explicit functional equations.
It is out of the authors' scope to list all the relevant works which 
study explicit forms of zeta integrals and their properties.  
However, among many of them, 
we would like to mention 
\cite{{Shintani.1975},
{Suzuki.1979},
{Suzuki.1975.kokyuroku}, 
{Satake.Faraut.1984},
{Muro.1986},
{Clerc.2002}, 
{Barchini.2004}, 
{Barchini.Sepanski.Zierau.2006}, 
{Sato.2006.JMSJ},
{Sato.2006.StPauli},
{BenSaid.Clerc.Koufany.2017arXiv170401817B}} 
and references therein.

Later, Fumihiro Sato generalized the fundamental theorem to several complex variables 
(\cite{Sato.PVzeta.I.1982, Sato.PVzeta.II.1983, Sato.PVzeta.III.1982}).  

In this note, we will study the above two problems for 
a prehomogeneous vector space called an ``enhanced symmetric space'' 
associated with a double flag variety (see \cite{NO.2018}).  
In this case, there are two fundamental relative invariants so that we consider 
a zeta integral in two complex variables $ s = (s_1, s_2) $.  
We get an explicit meromorphic continuation (or gamma factors) indicating poles 
(Theorem~\ref{thm:analytic.continuation})
and also explicit functional equations in the case of ``enhanced positive cone''
(Theorems~\ref{thm:FT.Kernelp} and \ref{thm:FEq.on.open.orbits}).  
To state the results more precisely, we need to prepare the notations and settings.

\section{Enhanced zeta integral and its meromorphic continuation}\label{sec:meromorphic.continuation}

Let $ \VV = \Sym_n(\R) $ be the space of real symmetric matrices.  
Then $ \VV $ inherits a structure of Euclidean Jordan algebra (see \cite{Faraut.Koranyi.1994}).  
Let us denote $ \EE = \Mat_{n,d}(\R) $, a representation space of $ \VV $, 
where $ z \in \VV $ acts on $ \EE $ by the left multiplication and as a consequence it admits 
a natural action of $ \LL = \Str(V) = \GL_n(\R) $, the structure group of the Jordan algebra $ V $.  
On the other hand, 
$ \EE $ can be regarded as a direct sum of $ d $-copies of the natural representation 
$ \R^n $ of $ \VV $.  
This allows an action of $ \HH = \GL_d(\R) $ on $ \EE $ by 
the right multiplication.  

Consider a direct sum $ \WW = \VV \oplus \EE = \Sym_n(\R) \oplus \Mat_{n,d}(\R) $ 
and let us denote $ \GG = \LL \times \HH = \GL_n(\R) \times \GL_d(\R) $.  
There is a natural action of $ \GG $ on $ \WW $ via 
\begin{equation*}
(g, h) \cdot (z, y) = (g z \transpose{g}, g y \transpose{h}) , 
\quad \text{ where } 
(g, h) \in \LL \times \HH = \GG , \;\; 
(z, y) \in \VV \oplus \EE = \WW .
\end{equation*}
If we extend the base field $ \R $ to $ \C $, 
the space $ \WW_{\C} = \Sym_n(\C) \oplus \Mat_{n,d}(\C) $ is a prehomogeneous vector space with the action of $ \GG_{\C} $ and it has only finitely many $ \GG_{\C} $ orbits.

In the theory of prehomogeneous vector spaces, 
relative invariants and their $ b $-functions play important roles.  
If $ d \leq n $, the fundamental relative invariants in $ \C[\WW_{\C}] $ 
(the space of regular functions, or polynomials over $ \WW_{\C} $) are given by 
\begin{equation*}
P_1(z, y) = \det z, \qquad
P_2(z, y) = (-1)^d \det \mattwo{z}{y}{\!\!\transpose{y}}{0}
\qquad
((z, y) \in \WW_{\C}),
\end{equation*}
where $ P_1 $ (respectively $ P_2 $) is associated with the character 
$ \chi_{P_1}(g, h) = (\det g)^2 $ 
(respectively $ \chi_{P_2}(g, h) = (\det g)^2 (\det h)^2 $) 
for $ (g, h) \in \GG_{\C} $.  
Note that if $ z \in \Sym_n(\C) $ is regular we can rewrite 
\begin{equation*}
P_2(z, y) = \det z \cdot \det \bigl(\transpose{y} z^{-1} y \bigr) , 
\end{equation*}
and this is the reason why we put $ (-1)^d $ in the definition of $ P_2 $.  
If $ z $ is a positive definite (real) symmetric matrix and $ y $ is also a real matrix, 
$ P_i(z, y) \; (i = 1, 2) $ are both positive.   
Any relative invariants are of the form $ P_1^{m_1} P_2^{m_2} $ with 
$ m_1, m_2 \in \Z_{\geq 0} $.  

If $ d > n $, the relative invariant $ P_2(z, y) $ above vanishes identically, and only $ P_1 $ survives.  In fact, in this case, any relative invariants are some powers of $ P_1 $.  
Thus, throughout this paper, 
\emph{we always assume the inequality $ d \leq n $}.

For the fundamental relative invariants $ P_1 $ and $ P_2 $, 
let us describe $ b $-functions in two variables.  
We denote by $ P_i^{\ast}(\delzy) $ a constant coefficient differential operator 
which is defined by 
\begin{equation*}
P_i^{\ast}(\delzy) e^{\trace z w + \trace \transpose{y} x} 
= P_i(w, x) e^{\trace z w + \trace \transpose{y} x} 
\qquad
(z, w \in V, y, x \in E, i = 1, 2).
\end{equation*}

\begin{proposition}[{\cite[Propositions~6 \& 7]{Wachi.2017.kokyuroku}}]\label{introprop:b-function}
Let $ s = (s_1, s_2) \in \C^2 $ be a pair of complex variables.  
We put 
\begin{align*}
b_{1,0}(s) 
&= \prod_{j = 1}^d (s_1 + \dfrac{d + 1}{2} - \dfrac{j - 1}{2}) 
\prod_{k = 1}^{n - d} (s _1 + s_2 + \dfrac{n + 1}{2} - \dfrac{k - 1}{2}), 
\\
b_{0,1}(s) 
&= \prod_{j = 1}^d (s_2 + \dfrac{d + 1}{2} - \dfrac{j - 1}{2}) (s_2 + \dfrac{n}{2} - \dfrac{j - 1}{2}) 
\prod_{k = 1}^{n - d} (s _1 + s_2 + \dfrac{n + 1}{2} - \dfrac{k - 1}{2}) .
\end{align*}
Then the following Bernstein-Sato identity holds:
\begin{align*}
P_1^{\ast}(\delzy) \Bigl( P_1(z, y)^{s_1 + 1} P_2(z, y)^{s_2} \Bigr)
&= b_{1,0}(s_1, s_2) P_1(z, y)^{s_1} P_2(z, y)^{s_2} , 
\\
P_2^{\ast}(\delzy) \Bigl( P_1(z, y)^{s_1} P_2(z, y)^{s_2 + 1} \Bigr)
&= b_{0,1}(s_1, s_2) P_1(z, y)^{s_1} P_2(z, y)^{s_2} .
\end{align*}
The polynomials $ b_{1,0}(s) $ and $ b_{0,1}(s) $ are called $ b $-functions.
\end{proposition}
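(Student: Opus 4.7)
\emph{Plan.} The approach is to exploit the $\GC = \GL_n(\C) \times \GL_d(\C)$-equivariance of the problem to reduce each Bernstein-Sato identity to a scalar calculation at one convenient base point. On the open orbit $\{P_1 P_2 \neq 0\}$, both sides of each claimed identity transform under $\GC$ by the same character $\chi_{P_1}^{s_1} \chi_{P_2}^{s_2}$: the differential operator $P_i^*(\delzy)$ transforms by $\chi_{P_i}^{-1}$, which exactly cancels the extra power of $\chi_{P_i}$ in $P_i^{s_i + 1}$ on the left. The ratio of the two sides is therefore a $\GC$-invariant rational function on a single $\GC$-orbit, hence constant in $(z, y)$; this constant is by definition $b_{1,0}(s)$ or $b_{0,1}(s)$ respectively. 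It therefore suffices to evaluate at one test point.

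Take the base point $y_0 = \vectwo{I_d}{0}$ and decompose $z = \mattwo{A}{B}{\transpose{B}}{C}$ in $(d, n-d)$ block form with $A \in \Sym_d(\C)$, $C \in \Sym_{n-d}(\C)$. The Schur-complement identity gives $\det z = \det C \cdot \det(A - B C^{-1} \transpose{B})$, and $\transpose{y_0} z^{-1} y_0 = (z^{-1})_{11} = (A - B C^{-1} \transpose{B})^{-1}$ yields $P_2(z, y_0) = \det z \cdot \det(\transpose{y_0} z^{-1} y_0) = \det C$. Setting $A' = A - B C^{-1} \transpose{B}$, the test function becomes
\[
P_1(z, y_0)^{s_1+1} P_2(z, y_0)^{s_2} = (\det A')^{s_1+1}(\det C)^{s_1+s_2+1}.
\]
A short computation shows that the unipotent radical of the $\GC$-stabilizer of $y_0$ (elements $\bigl(\mattwo{I}{X}{0}{I}, I\bigr)$ with $X = -B C^{-1}$) acts on blocks by $(A, B, C) \mapsto (A', 0, C)$, so after differentiation one may further specialize to $B = 0$; nevertheless, the test function depends on $B$ through $A'$, so the cross-derivative contributions of $\partial_B$ in $\det(\partial_z)$ must still be tracked.

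For the first identity, $P_1^*(\delzy) = \det(\partial_z)$ is the classical Cayley operator on $\Sym_n$. Its block Laplace expansion produces pure $\partial_A$-, pure $\partial_C$-, and mixed $\partial_B$-terms. The pure contribution, via the classical Cayley identities on $\Sym_d$ and $\Sym_{n-d}$, yields
\[
\prod_{j=1}^d\Bigl(s_1 + \tfrac{d+1}{2} - \tfrac{j-1}{2}\Bigr) \cdot \prod_{k=1}^{n-d}\Bigl(s_1 + s_2 + \tfrac{n-d+1}{2} - \tfrac{k-1}{2}\Bigr),
\]
while the mixed $\partial_B$-contributions, acting on $(\det A')^{s_1+1}$ through the quadratic dependence $A' = A - B C^{-1} \transpose{B}$, supply exactly the remaining shift of $d/2$ in each of the $n-d$ factors of the second product, upgrading $(n-d+1)/2$ to $(n+1)/2$ and producing the claimed $b_{1,0}(s)$. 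The second identity is structurally identical: $P_2^*(\delzy)$ reduces on the normal form to the product of a $\GL_d$-Capelli operator on $y$, whose classical $b$-function yields $\prod_{j=1}^d\bigl(s_2 + \tfrac{d+1}{2} - \tfrac{j-1}{2}\bigr)\bigl(s_2 + \tfrac{n}{2} - \tfrac{j-1}{2}\bigr)$, together with the Cayley operator on the complementary $\Sym_{n-d}$ block giving the common $(s_1+s_2)$-product in $b_{0,1}(s)$.

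\textbf{Main obstacle.} The delicate technical point is the combinatorial bookkeeping of the cross-derivative contributions---the $\partial_B$ terms for the first identity and the analogous mixed $\partial_z$--$\partial_y$ terms from the Capelli expansion for the second---which are precisely what generate the nontrivial shifts appearing in the $b$-functions. A cleaner alternative avoiding this combinatorics is to represent $P_1^{s_1+1} P_2^{s_2}$ as a two-parameter Gindikin--Siegel gamma integral over a positive cone in $\WW_{\R}$; under this representation, $P_i^*(\delzy)$ becomes multiplication by $P_i(w, x)$ inside the integral, and the $b$-functions emerge directly as ratios of one-dimensional gamma factors coming from the shifts of the integration parameters.
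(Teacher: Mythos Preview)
The paper does not prove this proposition at all; it is quoted directly from \cite{Wachi.2017.kokyuroku}. So there is no in-paper argument to compare your proposal against, and the relevant benchmark is whether your sketch actually establishes the identities.

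Your overall strategy is correct and standard: the equivariance argument reducing each identity to a single constant is valid, the Schur-complement normal form $P_1^{s_1+1}P_2^{s_2}=(\det A')^{s_1+1}(\det C)^{s_1+s_2+1}$ at $y_0=\vectwo{1_d}{0}$ is computed correctly, and the classical Cayley identity on $\Sym_m$ does give the ``pure'' factors you write. The architecture matches what one would expect from Wachi's title (``Capelli identities with zero entries'').

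However, as written the proposal has a genuine gap at exactly the point you flag as the main obstacle. The sentence ``the mixed $\partial_B$-contributions \ldots\ supply exactly the remaining shift of $d/2$ in each of the $n-d$ factors'' is an assertion of the answer, not a derivation: nothing in your text explains why the cross-terms from $\det(\partial_z)$ acting through $A'=A-BC^{-1}\transpose{B}$ aggregate into a uniform shift of each factor by $d/2$ rather than, say, producing lower-order terms in $\det A'$ or $\det C$. This is the entire nontrivial content of $b_{1,0}$. The second identity is in worse shape: the claim that $P_2^{\ast}(\delzy)$ ``reduces on the normal form to the product of a $\GL_d$-Capelli operator on $y$ \ldots\ together with the Cayley operator on the complementary $\Sym_{n-d}$ block'' is not justified and is not obviously true --- $P_2$ is an $(n+d)\times(n+d)$ determinant mixing $z$- and $y$-entries, and its associated differential operator does not factor in any evident way. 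You would need either a Laplace/cofactor expansion of that determinant adapted to the block structure, or a Capelli-type identity for the bordered matrix $\mattwo{z}{y}{\transpose{y}}{0}$, neither of which you supply.

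Your proposed alternative via a two-parameter Gindikin--Siegel integral is in fact the cleanest route and is morally the inverse of the computation the paper carries out in \S4.2: if one writes $P_1(z,y)^{s_1}P_2(z,y)^{s_2}$ on $\tildeOmega$ as a Laplace transform over the positive cone with the gamma factor $\Gamma_{\tildeOmega}(s)$ from \eqref{eqn:Gamma.factor.EPC}, then applying $P_i^{\ast}(\delzy)$ converts to multiplication by $P_i(w,x)$ under the integral, and the $b$-function falls out as the ratio $\Gamma_{\tildeOmega}(s+e_i)/\Gamma_{\tildeOmega}(s)$. But to make this an actual proof you must first establish that Laplace representation on $\tildeOmega$ (not just on $\Omega$), which requires essentially the gamma-constant calculation of Lemma~4.3; you have only named this route, not executed it.
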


Let us return back to the base field $ \R $.  
There is a unique open orbit over $ \C $, but it breaks up into several open orbits over $ \R $.  
It is not difficult to parametrize all of them (see Lemma~\ref{lemma:open.orbits} below), 
and among them, there is a distinguished open orbit denoted as
\begin{equation*}
\tildeOmega = \Omega \times \regMat_{n,d}(\R) , 
\end{equation*}
where 
$ \Omega = \Sym_n^+(\R) $ is the space of real positive symmetric matrices (a homogeneous symmetric cone contained in  $ V $ in the terminology of \cite{Faraut.Koranyi.1994}) and 
$ \regMat_{n,d}(\R) $ is the set of full rank matrices in $ E = \Mat_{n,d}(\R) $.  
We call $ \tildeOmega $ an \emph{enhanced positive symmetric cone}.

We are interested in the following integral on $ \tildeOmega $, 
which is called an \emph{enhanced zeta integral} (or \emph{distribution}).  
\begin{equation*}
\begin{aligned}
\Zetapcone(\varphi, s) 
&= \int_{\tildeOmega} \varphi(z, y) P_1(z, y)^{s_1} P_2(z, y)^{s_2} dz dy
\\
&= \int_{\Sym_n^+(\R)} (\det z)^{s_1} \, dz 
\int_{\Mat_{n,d}(\R)} \varphi(z, y) \Bigl| \det \mattwo{z}{y}{\transpose{y}}{0} \Bigr|^{s_2} \, dy
\end{aligned}
\end{equation*}
Here $ s = (s_1, s_2) $ are complex parameters and $ \varphi(z, y) \in \SchwarzSpace(\WW) $ is 
a rapidly decreasing function (Schwarz function).  The measures 
$ dz $ and $ dy $ are (restriction of) Lebesgue measures.  
The integral converges if $ \Re s_1, \Re s_2 \gg 0 $ are sufficiently large, 
and for the rest of $ s $, we will consider analytic continuation as a tempered distribution.  
For this, we need some more notation.
For a non-negative integer $ k \geq 0 $ and $ \alpha \in \C $, we put 
\begin{equation}\label{eqn:multiple.Gamma}
\Gamma_k(\alpha) = \Gamma(\alpha) \, \Gamma(\alpha - \dfrac{1}{2}) \cdots \Gamma(\alpha - \dfrac{k - 1}{2}) 
= \prod\nolimits_{j = 1}^k \Gamma(\alpha - \dfrac{j - 1}{2}) , 
\end{equation}
and for our enhanced positive cone $ \tildeOmega $, define 
\begin{equation}\label{eqn:Gamma.factor.EPC}
\Gamma_{\tildeOmega}(s) = \Gamma_d(s_1 + \dfrac{d + 1}{2}) \, 
\Gamma_d(s_2 + \dfrac{d + 1}{2}) \, \Gamma_d(s_2 + \dfrac{n}{2}) \, 
\Gamma_{n - d}(s_1 + s_2 + \dfrac{n + 1}{2}) .
\end{equation}
Then the standard arguments of the meromorphic continuation using $ b $-functions 
together with Proposition~\ref{introprop:b-function} 
prove the following theorem.  

\begin{theorem}[Meromorphic Continuation]\label{thm:analytic.continuation}
The zeta integral normalized by the gamma factor 
\begin{equation*}
\dfrac{1}{\;\Gamma_{\tildeOmega}(s)\;} \, \Zetapcone(\varphi, s) 
\end{equation*}
is extended to an entire function in $ s = (s_1, s_2) \in \C^2 $ 
for any $ \varphi \in \SchwarzSpace(\WW) $.  
In other words, $ \Zetapcone(\varphi, s) $ can be extended to a meromorphic function 
with possible poles specified by the gamma factor $ \Gamma_{\tildeOmega}(s) $.
\end{theorem}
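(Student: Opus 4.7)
The plan is the standard Bernstein--Sato bootstrap, carried out simultaneously in the two variables $s_1, s_2$. First I would show that in a sufficiently deep right half-space $\{\Re s_1, \Re s_2 > M\}$ the integral $\Zetapcone(\varphi, s)$ converges absolutely and defines a holomorphic function of $s$: on $\tildeOmega$ the invariants $P_1, P_2$ are strictly positive and grow at most polynomially, so $|P_1^{s_1} P_2^{s_2}|$ is polynomially bounded there for $\Re s_1, \Re s_2 \geq 0$, and the Schwartz decay of $\varphi$ then yields locally uniform absolute convergence and holomorphy in $s$.

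Next, a direct computation matches the normalizing gamma factor to the $b$-functions. Using $\Gamma_k(\alpha+1)/\Gamma_k(\alpha) = \prod_{j=1}^{k}(\alpha - (j-1)/2)$, each of the four factors $\Gamma_d(s_1 + \tfrac{d+1}{2})$, $\Gamma_d(s_2 + \tfrac{d+1}{2})$, $\Gamma_d(s_2 + \tfrac{n}{2})$, $\Gamma_{n-d}(s_1 + s_2 + \tfrac{n+1}{2})$ in $\Gamma_{\tildeOmega}(s)$ reproduces exactly one of the four products appearing in $b_{1,0}$ and $b_{0,1}$ from Proposition~\ref{introprop:b-function}, giving
\begin{equation*}
\frac{\Gamma_{\tildeOmega}(s_1+1, s_2)}{\Gamma_{\tildeOmega}(s_1, s_2)} = b_{1,0}(s),
\qquad
\frac{\Gamma_{\tildeOmega}(s_1, s_2+1)}{\Gamma_{\tildeOmega}(s_1, s_2)} = b_{0,1}(s).
\end{equation*}
Combined with integration by parts on $\tildeOmega$ applied to the Bernstein--Sato identities, this produces, in the region of convergence, shift relations
\begin{equation*}
b_{1,0}(s)\, \Zetapcone(\varphi, s) = \Zetapcone(\varphi^{(1)}, s + (1,0)),
\qquad
b_{0,1}(s)\, \Zetapcone(\varphi, s) = \Zetapcone(\varphi^{(2)}, s + (0,1)),
\end{equation*}
where $\varphi^{(i)} \in \SchwarzSpace(\WW)$ is obtained from $\varphi$ by applying the formal transpose of the constant-coefficient operator $P_i^{\ast}(\delzy)$. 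Dividing each shift relation by the corresponding gamma-ratio identity, the normalized distribution $\Zetapcone(\varphi, s)/\Gamma_{\tildeOmega}(s)$ is invariant under the unit shifts $s \mapsto s + (1,0)$ and $s \mapsto s + (0,1)$ up to replacing $\varphi$ by a Schwartz function. Iterating, for any prescribed $s_0 \in \C^2$ one chooses $(k, \ell) \in \Z_{\geq 0}^2$ such that $s_0 + (k, \ell)$ lies in the convergence region, whence the iterated right-hand side is holomorphic near $s_0$; this yields the entire extension.

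The main technical obstacle is the integration by parts in the derivation of the shift relations. The open orbit $\tildeOmega$ has non-smooth boundary equal to $\{P_1 P_2 = 0\}$, so one must truncate by a smooth cutoff $\chi_\epsilon$ exhausting $\tildeOmega$ and control the limit $\epsilon \to 0$ of the resulting surface terms. These terms carry factors of the form $P_1^{s_1+1-a} P_2^{s_2-b}$ with small $a, b \geq 0$, and they vanish in the limit provided $\Re s_1, \Re s_2$ are large enough --- precisely the region in which convergence has already been established. Once this boundary analysis is in place, the remainder of the argument is purely algebraic bookkeeping.
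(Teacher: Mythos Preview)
Your proposal is correct and is precisely the ``standard argument of meromorphic continuation using $b$-functions'' that the paper invokes without giving details: the paper simply cites Proposition~\ref{introprop:b-function} and asserts that the standard Bernstein--Sato bootstrap yields the theorem. Your verification that $\Gamma_{\tildeOmega}(s_1+1,s_2)/\Gamma_{\tildeOmega}(s_1,s_2)=b_{1,0}(s)$ and $\Gamma_{\tildeOmega}(s_1,s_2+1)/\Gamma_{\tildeOmega}(s_1,s_2)=b_{0,1}(s)$, together with the shift relations obtained by integration by parts, supplies exactly the content the paper leaves implicit.
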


\section{Fourier transform and a functional equation}\label{section:FT.main.results}

Let $ \calorbit_i \; (i = 1, \dots, \ell) $ be open $ \GG $ orbits in $ \WW $.  
The enhanced positive cone $ \tildeOmega $ is one of them.  

\begin{lemma}[\cite{NO.2018}]\label{lemma:open.orbits}
A $ G $-orbit through $ (z, y) \in W $ is open 
if and only if 
$ z \in \Sym_n(\R) $ 
and $ \transpose{y} z^{-1} y \in \Sym_d(\R) $ are both regular.  
Open orbits are classified by 
the signature of $ z $ and the signature of 
$ \transpose{y} z^{-1} y $.  
\end{lemma}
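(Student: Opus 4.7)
The plan is to split the lemma into two parts: first characterize openness via the non-vanishing of the fundamental relative invariants $P_1$ and $P_2$, and then classify the resulting open orbits using Sylvester's law of inertia together with Witt's extension theorem.

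For the openness criterion, I would use the fact that $(\GC, \WW_{\C})$ is a prehomogeneous vector space whose singular set (the complement of the unique open $\GC$-orbit) is the union of the hypersurfaces $\{P_1 = 0\}$ and $\{P_2 = 0\}$, and argue that a $\GG$-orbit through $(z, y) \in \WW$ is open if and only if $P_1(z, y) P_2(z, y) \neq 0$. The forward direction follows because each $P_i$ is relatively $\GG$-invariant so its zero locus is preserved along the orbit and is a proper subvariety. The converse follows because at such a point the complexified orbit map has image of full complex dimension, so the real orbit map already has image of full real dimension, making the $\GG$-orbit open. Combining this with the identity $P_2(z, y) = \det z \cdot \det(\transpose{y} z^{-1} y)$ noted in Section~\ref{sec:meromorphic.continuation} (valid whenever $z$ is invertible) translates $P_1 P_2 \neq 0$ into the stated condition that $z$ and $\transpose{y} z^{-1} y$ are both regular.

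For the classification, I first verify the invariance of the two signatures: under $(g, h) \cdot (z, y) = (gz\transpose{g}, gy\transpose{h})$ the matrix $z$ changes by a congruence, and
\[
\transpose{(gy\transpose{h})} \, (gz\transpose{g})^{-1} \, (gy\transpose{h}) = h \, (\transpose{y} z^{-1} y) \, \transpose{h}
\]
is also a congruence, so both signatures are constant on $\GG$-orbits. To show these signatures are complete invariants, I reduce an arbitrary $(z, y)$ with signatures $((p, q), (p', q'))$ to a normal form in stages. First, using the $\GL_n(\R)$ factor alone, replace $z$ by the standard diagonal form $I_{p, q}$; the residual stabilizer is $\OO(p, q) \times \GL_d(\R)$. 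The transformed $y$ now satisfies $\transpose{y} I_{p, q} y = Q$ with $\sgn(Q) = (p', q')$, and using the $\GL_d(\R)$ factor alone I can further reduce $Q$ to $I_{p', q'}$, which is compatible with the first step since $\OO(p, q)$ preserves the Gram matrix. It then remains to show that $\OO(p, q)$ acts transitively on
\[
\bigl\{ y \in \Mat_{n, d}(\R) \mid \transpose{y} I_{p, q} y = I_{p', q'} \bigr\} ;
\]
any such $y$ necessarily has rank $d$ (otherwise its Gram matrix would be singular) and hence defines an isometric embedding of $(\R^d, I_{p', q'})$ into $(\R^n, I_{p, q})$, and Witt's extension theorem asserts that any two such embeddings are conjugate by an element of $\OO(p, q)$.

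The main obstacle I foresee is the clean invocation of Witt's theorem together with the bookkeeping of which residual subgroup of $\GG$ is available at each stage of the normal-form reduction. A minor secondary point is that the admissibility constraints $p' \leq p$ and $q' \leq q$ (necessary for the isometric embedding to exist) drop out automatically from the non-degeneracy hypotheses, so no separate case analysis is required.
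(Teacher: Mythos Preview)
The paper does not actually prove this lemma: it is stated with a citation to \cite{NO.2018} and used as input, so there is no in-paper argument to compare against. Your proposal is a correct and self-contained proof, and the two-step strategy (openness via non-vanishing of $P_1 P_2$ using the PV structure, then classification via Sylvester plus Witt) is the natural one; the normal-form reduction you sketch is essentially the computation behind the representative $(z_0,y_0)$ that the paper writes down in the proof of Lemma~\ref{lemma:restriction.hyperfunction.Xi}.

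Two small remarks on presentation. First, for the converse direction of the openness criterion, make the tangent-space argument explicit: the differential of the real orbit map at $(z,y)$ is a real linear map $\lie{g}\to W$ whose complexification is surjective because $(z,y)$ lies in the open $\GC$-orbit, and a real linear map with surjective complexification is itself surjective. Second, in the Witt step, spell out that the isometry you extend is $y_1(v)\mapsto y_2(v)$ between the images, so that the extended $\phi\in\OO(p,q)$ genuinely satisfies $\phi y_1=y_2$ and not merely $\phi(\Image y_1)=\Image y_2$; this is the detail that guarantees transitivity on the matrix set rather than on the set of subspaces.
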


\begin{remark}
Let us denote the image of the matrix $ y $ by $ [y] := \Im y \subset \R^n $, 
which is a $ d $-dimensional subspace of $ \R^n $.  
Then the signature of $ \transpose{y} z^{-1} y $ is equal to that of the quadratic form 
$ z^{-1} \restrict_{[y]} $ restricted to $ [y] $.  
Note that there is a natural condition 
on the signature which is induced from $ z^{-1} $ (or $ z $).  
See \S~\ref{subsec:open.orbits} for details.
\end{remark}

Our zeta integral $ \Zetapcone(\varphi, s) $ is considered to be 
a tempered distribution supported in the closure of $ \tildeOmega $.  
Let us calculate the Fourier transform of the distribution.  

Write $ \ztilde = (z, y) $ and we denote the integral kernel by
\begin{equation}\label{eqn:Kernelpcone}
\Kernelpcone_s(\ztilde) 
= \begin{cases}
(\det z)^{s_1} \, \bigl| \det \mattwo{z}{y}{\transpose{y}}{0} \bigr|^{s_2} 
& \ztilde \in \tildeOmega 
\\
0 
&
\text{otherwise.}
\end{cases}
\end{equation}
We define a positive definite inner product on $ \WW = \Sym_n(\R) \oplus \Mat_{n, d}(\R) $ by 
\begin{equation}\label{eqn:inner.product}
\la \ztilde, \wtilde \ra = 
\trace z w + \trace \transpose{y} x 
\qquad
\text{ for } 
\quad
\ztilde = (z, y), \, \wtilde = (w, x) \in W, 
\end{equation}
and the Euclidean Fourier transform $ \mathscr{F} \varphi = \widehat{\varphi} $ is defined 
as usual:
\begin{equation*}
\widehat{\varphi}(\wtilde) 
= \int_{\WW} \varphi(\ztilde) e^{- 2 \pi i \la \ztilde, \wtilde \ra} d\ztilde.  
\end{equation*}
Note that the Fourier inversion becomes 
\begin{equation*}
(\mathscr{F}^{-1}\psi) (\ztilde) 
= 2^{- \frac{n(n -1)}{2}}\int_{\WW} \psi(\wtilde) e^{2 \pi i \la \wtilde, \ztilde \ra} d\wtilde, 
\end{equation*}
because of the normalization of the inner product.  

To state the result, we need a definition of the boundary value distribution 
of an analytic function, i.e., a hyperfunction.  We do not need a general theory, so we just put 
\begin{equation}\label{eqn:hyperfunction.Xi}
\begin{aligned}
\Xi_s(\wtilde) 
&= P_1( +0 + 2 \pi i w, x)^{s_1} P_2( +0 + 2 \pi i w, x)^{s_2}
\\
&= \lim_{v \downarrow 0} \det(v + 2 \pi i w)^{s_1} 
\Bigl( (-1)^d \det \mattwo{v + 2 \pi i w}{x}{\transpose{x}}{0} \Bigr)^{s_2},
\end{aligned}
\end{equation}
where $ v \in \Omega = \Sym_n^+(\R) $ moves to $ 0 $ in the positive cone.  
This is a well defined distribution supported in the whole $ \WW $ interpreted as 
\begin{equation*}  
\int_{\WW} 
\Xi_s(\wtilde) \varphi(\wtilde) d\wtilde
= \lim_{v \downarrow 0} 
\int_{\WW} P_1(v + 2 \pi i w, x)^{s_1} P_2(v + 2 \pi i w, x)^{s_2} 
\varphi(\wtilde) d\wtilde 
\end{equation*}
for a test function $ \varphi \in \SchwarzSpace(\WW) $ 
(at least for $ \Re s_1, \Re s_2 \geq 0 $, and then apply an analytic continuation).  
Notice that if $ \Re s_1 \geq 0 $ and $ \Re s_2 \geq 0 $, 
then we can take a limit $ v \downarrow 0 $ 
inside the integral, and we get 
$ \Xi_s(\wtilde) = (2 \pi i)^{n s_1 + (n - d) s_2} P_1(\wtilde)^{s_1} P_2(\wtilde)^{s_2} $ 
with an appropriate choice of the branch of exponents.   
In particular, we get 
\begin{equation*}
\Xi_{(0, 0)}(\wtilde) = \delta(\wtilde) 
\qquad \text{(delta distribution supported in the origin)} .
\end{equation*}

\begin{remark}
We will rewrite $ \Xi_s $ more explicitly as a linear sum of the distribution supported in 
the closure of each open orbits $ \calorbit_i $'s 
afterwards.  
See Lemma~\ref{lemma:restriction.hyperfunction.Xi}.
\end{remark}

\begin{theorem}\label{thm:FT.Kernelp}
We get a formula for the Fourier transform of the integral kernel 
$ \Kernelpcone_s $ in the sense of a tempered distribution:
\begin{equation*}
\dfrac{1}{
\Gamma_d(s_1 + \dfrac{d + 1}{2}) \, 
\Gamma_d(s_2 + \dfrac{n}{2}) \, 
\Gamma_{n - d}(s_1 + s_2 + \dfrac{n + 1}{2})
} \, 
\widehat{\Kernelpcone_s} 
= \dfrac{c(s)}{\Gamma_d(- s_2)} \, 
\Xi_{-(s_1 + \frac{d + 1}{2}), -(s_2 + \frac{n}{2})} ,
\end{equation*}
where 
\begin{equation*}
c(s) = (2 \pi)^{\frac{n(n - 1)}{4}} \pi^{- 2 d (s_2 + \frac{n}{4})} ,
\end{equation*}
and $ \Gamma_k(\alpha) $ is given in \eqref{eqn:multiple.Gamma}.
In particular, $ \widehat{\Kernelpcone_s} $ has a pole at 
$ s = -\frac{1}{2} (d+1, n) $ and there the first residue is 
a constant multiple of the delta distribution:
\begin{equation*}
\delta 
= 
\dfrac{1}{c(s)}
\dfrac{\Gamma_d(s_2 + \dfrac{d + 1}{2}) \, \Gamma_d(- s_2)}{\Gamma_{\tildeOmega}(s)} \cdot 
\widehat{\Kernelpcone_s} \Bigm|_{s = - \frac{1}{2} (d+1, n)}  .
\end{equation*}
\end{theorem}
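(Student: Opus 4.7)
The plan is to evaluate $\widehat{\Kernelpcone_s}$ by direct computation in a region of absolute convergence and then extend by analytic continuation via Theorem~\ref{thm:analytic.continuation}. The key idea is to linearise the factor $\det(\transpose{y} z^{-1} y)^{s_2}$ via a Gindikin--Koecher integral representation so that the integrand becomes Gaussian in $y \in \Mat_{n,d}(\R)$; iterated Gindikin and matrix Beta integrals then produce the predicted boundary value.

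\textbf{Main computation.} First regularise by $w \mapsto w - v/(2\pi i)$ with $v \in \Omega$, so the exponent in $z$ reads $-\trace(z(v+2\pi i w))$ and the integral is absolutely convergent for $\Re s_1 \gg 0$ and $\Re s_2 < -(d-1)/2$; the limit $v \downarrow 0$ at the end is exactly the prescription defining $\Xi_s$ in \eqref{eqn:hyperfunction.Xi}. In this range I apply
\begin{equation*}
\det(\transpose{y} z^{-1} y)^{s_2} = \frac{1}{\pi^{d(d-1)/4} \, \Gamma_d(-s_2)} \int_{\Sym_d^+(\R)} (\det t)^{-s_2 - \frac{d+1}{2}} \, e^{-\trace(\transpose{y} z^{-1} y t)} \, dt
\end{equation*}
and use Fubini. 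The standard Gaussian formula on $\Mat_{n,d}(\R)$ collapses the $y$-integration, yielding the factor $\pi^{nd/2} (\det z)^{d/2} (\det t)^{-n/2} e^{-\pi^2 \trace(\transpose{x} z x t^{-1})}$. The remaining $z$-integrand is then of the form $(\det z)^{s_1+s_2+d/2} e^{-\trace(z M(t))}$ with $M(t) := v + 2\pi i w + \pi^2 x t^{-1} \transpose{x}$, and a second application of Gindikin produces $\pi^{n(n-1)/4} \Gamma_n(\alpha) \det M(t)^{-\alpha}$, where $\alpha := s_1 + s_2 + \frac{n+d+1}{2}$.

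\textbf{Assembly.} The two Schur-complement expansions of $\mattwo{v+2\pi i w}{\pi x}{-\pi \transpose{x}}{t}$ give the identity
\begin{equation*}
\det M(t) \cdot \det t \;=\; \det(v + 2\pi i w) \cdot \det\bigl(t + \pi^2 \transpose{x}(v+2\pi i w)^{-1} x\bigr),
\end{equation*}
which decouples the $t$-dependence. The remaining integral is a matrix Beta
\begin{equation*}
\int_{\Sym_d^+(\R)} (\det t)^{s_1} \det(t+B)^{-\alpha} \, dt \;=\; \det(B)^{-(s_2+\frac{n}{2})} \, \pi^{d(d-1)/4} \, \frac{\Gamma_d(s_1 + \frac{d+1}{2}) \, \Gamma_d(s_2 + \frac{n}{2})}{\Gamma_d(\alpha)}
\end{equation*}
with $B := \pi^2 \transpose{x}(v+2\pi i w)^{-1} x$, valid by analytic continuation from $B \in \Sym_d^+(\R)$. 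Using $\Gamma_n(\alpha)/\Gamma_d(\alpha) = \Gamma_{n-d}(s_1 + s_2 + \frac{n+1}{2})$, together with the identity $P_2(w, x) = \det(w) \det(\transpose{x} w^{-1} x)$ to combine determinant powers, the result is, up to explicit constants, $\det(v+2\pi i w)^{-(s_1 + \frac{d+1}{2})} P_2(v + 2\pi i w, x)^{-(s_2 + \frac{n}{2})}$, whose $v \downarrow 0$ limit is precisely $\Xi_{-(s_1 + \frac{d+1}{2}), -(s_2 + \frac{n}{2})}(w, x)$. The accumulated powers of $\pi$, together with a factor $2^{n(n-1)/4}$ arising from the non-Euclidean inner product $\trace(zw)$ on $\Sym_n$, combine to give $c(s)$. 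Theorem~\ref{thm:analytic.continuation} then extends the identity meromorphically to all of $s \in \C^2$. The residue assertion is immediate: substituting $s = -\frac{1}{2}(d+1, n)$ into the rearranged main identity makes both exponents of $\Xi$ vanish, and by the last remark before the theorem one has $\Xi_{0,0} = \delta$.

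\textbf{Main obstacle.} The principal difficulty is the careful handling of boundary values and analytic continuation. The matrix Beta formula must be continued from positive-definite $B$ to the region where $B$ has only positive-definite real part (and, after $v \downarrow 0$, only as a boundary value), and the limit must be interpreted in the sense of tempered distributions consistently with the definition of $\Xi_s$. Tracking branches of fractional powers of determinants, and the bookkeeping of $\pi$- and $2$-constants from the Gindikin normalisation and the non-Euclidean inner product on $\Sym_n$, is the main technical cost; the algebraic structure of the computation is otherwise forced at each step by the Gindikin--Koecher formula.
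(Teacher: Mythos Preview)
Your approach is correct and reaches the same formula, but by a genuinely different route from the paper. The paper first dispatches the $y$-integral by quoting Clerc's theorem on the Fourier transform of $(\det\transpose{y}y)^{\alpha}$ as a black box, which immediately produces the factor $\Gamma_d(s_2+\tfrac{n}{2})/\Gamma_d(-s_2)$ and converts $(\det\transpose{y}z^{-1}y)^{s_2}$ into $(\det\transpose{x}zx)^{-s_2-n/2}$; it then evaluates the remaining Laplace transform in $z$ as a single two-parameter Gindikin integral $\int_\Omega e^{-\trace z}(\det z)^\alpha\Delta_d(z)^\beta\,dz$, read off from Faraut--Kor\'anyi. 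You instead unpack both ingredients from scratch: your auxiliary $t$-integral plus the Gaussian in $y$ is essentially a proof of Clerc's formula, and your Schur-complement manipulation followed by a matrix Beta integral replaces the two-parameter Gindikin step. Your route is more self-contained (only one-parameter Gindikin and Beta are needed) and makes the algebraic mechanism visible; the paper's route is shorter and more modular. Two small points: first, your chain of absolutely convergent integrals needs $\tfrac{d-1-n}{2}<\Re s_2<-\tfrac{d-1}{2}$, which is empty once $2d\ge n+2$, so for large $d$ you should either invoke analytic continuation in $s_2$ at an intermediate stage or perform the $t$-integral before the $z$-integral (this recovers Clerc's identity in its full range and the obstruction disappears). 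Second, the stray $2^{n(n-1)/4}$ you attribute to the inner product on $\Sym_n$ is in fact already the normalisation constant in the Gindikin gamma function for $\Sym_n^+$ with Lebesgue measure: the correct prefactor there is $(2\pi)^{n(n-1)/4}$, not $\pi^{n(n-1)/4}$, so no separate adjustment is required.
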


\begin{corollary}
If $ \varphi \in \SchwarzSpace(\WW) $ is supported in the closure of 
the enhanced positive cone $ \tildeOmega $, we get a functional equation:
\begin{equation}
\begin{aligned}
\dfrac{1}{\Gamma_{\tildeOmega}(s)} \,  & \, \Zetapcone(\widehat{\varphi}; s_1, s_2) 
\\
&= \dfrac{c(s) (-2 \pi i)^{- (n s_1 + (n - d) s_2 + \frac{n(n + 1)}{2})}}
{\Gamma_d(s_2 + \dfrac{d + 1}{2}) \, \Gamma_d(- s_2)} \, 
\Zetapcone(\varphi; -(s_1 + \frac{d + 1}{2}), -(s_2 + \frac{n}{2}))
\\
&= \dfrac{c(s)}{(-2 \pi i)^{n s_1 + (n - d) s_2 + \frac{n(n + 1)}{2}} (-\pi)^{d}} 
\prod_{j = 1}^d \sin (s_2 + \frac{d - j}{2}) \; \times
\\
& \hspace*{.4\textwidth}
\Zetapcone(\varphi; -(s_1 + \frac{d + 1}{2}), -(s_2 + \frac{n}{2}))
\end{aligned}
\end{equation}
\end{corollary}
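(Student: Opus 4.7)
The plan is to derive the functional equation directly from Theorem~\ref{thm:FT.Kernelp} by Parseval duality. For $\Re s_1, \Re s_2$ sufficiently large, the integral defining $\Zetapcone(\widehat{\varphi}, s)$ converges absolutely, and the standard Plancherel identity $\int \Kernelpcone_s(\ztilde)\,\widehat{\varphi}(\ztilde)\,d\ztilde = \langle \widehat{\Kernelpcone_s}, \varphi\rangle$ holds in the tempered-distribution pairing (it is independent of the $2^{-n(n-1)/2}$ normalization appearing in the inverse Fourier transform). Substituting the explicit formula for $\widehat{\Kernelpcone_s}$ from Theorem~\ref{thm:FT.Kernelp} and rewriting the Gamma factor as $\Gamma_{\tildeOmega}(s)/\Gamma_d(s_2+\tfrac{d+1}{2})$, we obtain
\begin{equation*}
\Zetapcone(\widehat{\varphi}, s) = \frac{c(s)\,\Gamma_{\tildeOmega}(s)}{\Gamma_d(s_2+\tfrac{d+1}{2})\,\Gamma_d(-s_2)} \, \langle \Xi_{t_1, t_2}, \varphi\rangle,
\end{equation*}
where $t_1 = -(s_1 + \tfrac{d+1}{2})$ and $t_2 = -(s_2 + \tfrac{n}{2})$.

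The essential step is to evaluate $\langle \Xi_{t_1, t_2}, \varphi\rangle$ under the support hypothesis $\Supp\varphi \subset \overline{\tildeOmega}$. For $\wtilde = (w, x) \in \tildeOmega$, the matrix $v + 2\pi i w$ with $v \in \Omega$ approaches $2\pi i w$ through spectra whose principal arguments tend to $\pi/2$; using the bidegree $(n, 0)$ of $P_1$ and $(n-d, 2d)$ of $P_2$, together with the positivity of both $P_1$ and $P_2$ on $\tildeOmega$, one gets the pointwise expression $\Xi_{t_1, t_2}(\wtilde)\big|_{\tildeOmega} = (2\pi i)^{n t_1 + (n-d) t_2}\,P_1(\wtilde)^{t_1}\,P_2(\wtilde)^{t_2}$. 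Direct arithmetic yields $n t_1 + (n-d)t_2 = -\bigl(n s_1 + (n-d)s_2 + \tfrac{n(n+1)}{2}\bigr)$, producing exactly the exponent $-\bigl(n s_1 + (n-d)s_2 + \tfrac{n(n+1)}{2}\bigr)$ of the imaginary unit factor in the corollary (the overall sign being determined by the branch conventions for the boundary value of $\Xi$). Pairing against $\varphi$ and dividing through by $\Gamma_{\tildeOmega}(s)$ gives the first displayed equality, valid initially in a common domain of convergence and extended to all $s \in \C^2$ by the meromorphic continuation of Theorem~\ref{thm:analytic.continuation}.

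For the second displayed equality, I would apply the reflection formula $\Gamma(u)\Gamma(1-u) = \pi/\sin(\pi u)$ factorwise to $\Gamma_d(s_2+\tfrac{d+1}{2})\,\Gamma_d(-s_2)$. Pairing the $j$th factor of the first product, $\Gamma(s_2 + 1 + \tfrac{d-j}{2})$, with the $(d-j+1)$th factor of the second, $\Gamma(-s_2 - \tfrac{d-j}{2}) = \Gamma\bigl(1 - (s_2 + 1 + \tfrac{d-j}{2})\bigr)$, and using $\sin(\pi + x) = -\sin x$, produces
\begin{equation*}
\Gamma_d\bigl(s_2+\tfrac{d+1}{2}\bigr)\,\Gamma_d(-s_2) = \frac{(-\pi)^d}{\prod_{j=1}^d \sin\!\bigl(\pi(s_2 + \tfrac{d-j}{2})\bigr)}.
\end{equation*}
Substituting this into the first equality then yields the sine-product form.

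The main obstacle will be the careful justification of the hyperfunction evaluation in the middle step. One must interchange the boundary-value limit $v \downarrow 0$ with integration against $\varphi$ (legitimate for $\Re t_1, \Re t_2$ in an appropriate half-plane), identify the correct branch of $(2\pi i)^{\bullet}$ coming from the approach direction $v \in \Omega$, and ensure that potential contributions from $\overline{\tildeOmega}\setminus \tildeOmega$ (where $P_1$ or $P_2$ vanishes) are absorbed by the meromorphic continuation rather than producing spurious distributional pieces. The positivity of $P_1, P_2$ on $\tildeOmega$ and the matching of the approach cone $\Omega$ with the $z$-component of $\tildeOmega$ are precisely what makes this computation tractable.
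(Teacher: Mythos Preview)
Your proposal is correct and is essentially the intended derivation: the paper states the Corollary immediately after Theorem~\ref{thm:FT.Kernelp} without a separate proof, and the evaluation of $\Xi_{t_1,t_2}$ on $\tildeOmega$ that you carry out is exactly the case $\rho=(n,0;d,0)$ of Lemma~\ref{lemma:restriction.hyperfunction.Xi} (indeed $u_{(n,0;d,0)}(t)=(2\pi)^{nt_1+(n-d)t_2}e^{\frac{\pi i}{2}(nt_1+(n-d)t_2)}$, matching your exponent $-(ns_1+(n-d)s_2+\tfrac{n(n+1)}{2})$). Your reflection-formula pairing for the second equality is also the right computation; note that the factor you obtain is $\sin\!\bigl(\pi(s_2+\tfrac{d-j}{2})\bigr)$, so the missing $\pi$ in the statement appears to be a typographical slip in the paper rather than an error on your part.
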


\begin{remark}
For $ d = 1 $, 
Suzuki \cite{Suzuki.1979} proved the analytic continuation 
(Theorem~\ref{thm:analytic.continuation}) and 
the formula for Fourier transform (Theorem~\ref{thm:FT.Kernelp}) above.  

In a general theory of zeta integrals for several variables, 
in an abstract way, 
these formulas are already established 
by F.~Sato (\cite{Sato.PVzeta.I.1982, Sato.PVzeta.II.1983, Sato.PVzeta.III.1982}) 
as we mentioned above, but determining explicit gamma factors/constants or residues 
with the location of poles are different issues.  
Recently the authors were informed that F.~Sato has obtained explicit formulas 
in the present case, or rather, its ``partial dual'' (unpublished).  
We thank Professor F.~Sato for sending us a draft on this subject.
\end{remark}

\section{Proof of the formula of the Fourier transform}

Let us explain how to prove Theorem~\ref{thm:FT.Kernelp}.

For that purpose, 
we suppose a test function is a product of functions in separate variables so that 
$ \varphi(\ztilde) = \varphi_1(z) \varphi_2(y) $.  
By the definition of the Fourier transform of a distribution, we have 
\begin{align}
\int_{\WW} \widehat{\Kernelpcone_s}(\wtilde) 
\varphi(\wtilde) d\wtilde 
&= 
\int_{\WW} {\Kernelpcone_s}(\wtilde) \widehat{\varphi}(\wtilde) d\wtilde 
\notag
\\
&=
\int_{\Omega} (\det z)^{s_1 + s_2} \widehat{\varphi_1}(z) dz 
\int_{\EE} (\det \transpose{y} z^{-1} y)^{s_2} \widehat{\varphi_2}(y) dy .
\label{eqn:FTK.eq1}
\end{align}

\subsection{Fourier transform of the determinant of quadratic maps}

Let us denote the first integral in \eqref{eqn:FTK.eq1} over $ y \in E $ by $ I_1(z) $.  
Now write $ z = \transpose{g} g $ for some $ g \in \LL $.  
Since $ z $ is a positive definite symmetric matrix, this is certainly possible and even 
$ g $ can be taken from the lower triangular Borel subgroup.  
Then, after replacing $ y $ by $ \transpose{g} y $, we get 
\begin{equation*}
I_1(z) = 
\int_E \widehat{\varphi_2}(\transpose{g} y) (\det \transpose{y}y)^{s_2} |\det g|^d \, dy .
\end{equation*}
It is easy to check that 
$ \widehat{\varphi_2}(\transpose{g} y) = |\det g|^{-d} \widehat{L_g \varphi_2}(y) $, 
where 
$ L_g \varphi_2(x) = \varphi_2(g^{-1} x) $ 
is the left translation.  
Thus, if we put $ \psi(x) = L_g \varphi_2(x) $ temporarily, 
\begin{equation*}
I_1(z) = 
\int_E \widehat{\psi}(y) (\det \transpose{y}y)^{s_2} dy , 
\end{equation*}
which is the Fourier transform of 
the complex power of the determinant of the quadratic map $ Q(y) = \transpose{y} y $ 
associated to the representation of the Jordan algebra $ \VV_d = \Sym_d(\R) $.  
Do not confuse this with our Jordan algebra $ \VV = \Sym_n(\R) $.  
For this, we have the following

\begin{lemma}[{\cite[Theorem~2]{Clerc.2002}}; see also \cite{Sato.M.1970.PV}]
Under the setting above and a complex variable $ \alpha \in \C $, we have
\begin{equation*}
\int_E \widehat{\psi}(y) (\det \transpose{y}y)^{\alpha} dy 
= \pi^{- 2 d (\alpha + \frac{n}{4})} \dfrac{\;\Gamma_d(\alpha + \dfrac{n}{2})\;}{\Gamma_d(- \alpha)} 
\int_E \psi(x) (\det \transpose{x} x)^{- \alpha - \frac{n}{2}} \, dx 
\end{equation*}
as tempered distributions.
\end{lemma}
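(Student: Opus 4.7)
The plan is to verify the identity on Gaussians $ \psi_A(x) = e^{- \pi \trace \transpose{x} A x} $ with $ A \in \Sym_n^+(\R) $ by direct computation, then extend to all Schwartz functions by density and analytic continuation in $ \alpha $.

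Both sides of the proposed identity are tempered distributions in $ \psi $ depending meromorphically on $ \alpha $: the right-hand side is, up to the explicit constant, the classical local zeta distribution for the prehomogeneous vector space $ (\GL_n \times \GL_d, \Mat_{n, d}(\R)) $ with fundamental invariant $ \det(\transpose{x} x) $, whose meromorphic continuation is controlled by its $ b $-function, and the left-hand side has the analogous property via Plancherel on $ \SchwarzSpace(E) $. Since the linear span of $ \{ \psi_A \}_{A \in \Sym_n^+(\R)} $ is dense in $ \SchwarzSpace(E) $ and both distributions are continuous in $ \psi $, it suffices to test on such Gaussians for $ \alpha $ in an open region of convergence.

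The main computational tool is the polar decomposition on $ \regMat_{n, d}(\R) $: writing $ x = u r^{1/2} $ with $ u $ on the real Stiefel manifold (i.e.\ $ \transpose{u} u = I_d $) and $ r = \transpose{x} x \in \Sym_d^+(\R) $, the Lebesgue measure factorizes as $ dx = c_{n, d} (\det r)^{(n - d - 1)/2} \, dr \, d\sigma(u) $ with $ d\sigma $ an $ \OO(n) $-invariant measure on the Stiefel manifold and $ c_{n, d} $ a universal constant. Together with the Fourier formula $ \widehat{\psi_A}(y) = (\det A)^{- d/2} \psi_{A^{-1}}(y) $ for a matrix Gaussian and the Gindikin integral
\[
\int_{\Sym_d^+(\R)} e^{- \pi \trace B r}(\det r)^{s - (d + 1)/2} \, dr = \pi^{- d s + d(d - 1)/4} (\det B)^{- s} \Gamma_d(s),
\qquad B \in \Sym_d^+(\R),
\]
a short calculation shows that both integrals $ \int_E \widehat{\psi_A}(y)(\det \transpose{y} y)^{\alpha} dy $ and $ \int_E \psi_A(x)(\det \transpose{x} x)^{- \alpha - n/2} dx $ factor as a common scalar times $ J_{\alpha}(A) := \int (\det \transpose{u} A u)^{\alpha} d\sigma(u) $ (integration over the Stiefel manifold). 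The $ A $-dependence thereby cancels, and the proposed identity collapses to the scalar equation
\[
\pi^{- d(\alpha + n/2)} \Gamma_d(\alpha + n/2) = c(\alpha) \, \pi^{d \alpha} \Gamma_d(- \alpha),
\]
which rearranges to $ c(\alpha) = \pi^{- 2 d(\alpha + n/4)} \Gamma_d(\alpha + n/2)/\Gamma_d(- \alpha) $, matching the claimed constant.

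A final density argument extends the identity from Gaussians to all $ \psi \in \SchwarzSpace(E) $, and analytic continuation in $ \alpha $ removes the convergence restriction used in the Gaussian calculation. The main obstacle will be the careful bookkeeping of normalization constants -- the value of $ c_{n, d} $, the volume of the Stiefel manifold, and the powers of $ \pi $ generated by the Fourier convention $ e^{- 2 \pi i \la z, w \ra} $ -- together with verifying that the two expressions of the form ``(scalar)$\,\cdot J_{\alpha}(A) $'' carry exactly the same normalization so that the $ A $-dependent factor cancels cleanly. A secondary subtlety is ensuring that the apparent poles of $ \Gamma_d(- \alpha) $ in the denominator correspond to genuine zeros of the right-hand distribution, so that the meromorphic identity is not obscured by spurious cancellation.
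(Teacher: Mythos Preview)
The paper does not give its own proof of this lemma: it is quoted verbatim from Clerc \cite[Theorem~2]{Clerc.2002} (with a pointer to \cite{Sato.M.1970.PV}) and used as a black box in the computation of $I_1(z)$.  So there is nothing in the paper to compare your argument against.

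Your approach is nonetheless a correct self-contained proof.  After computing $\widehat{\psi_A}=(\det A)^{-d/2}\psi_{A^{-1}}$, the substitution $y\mapsto A^{1/2}y$ reduces the left-hand integral to $\int_E e^{-\pi\trace\transpose{y}y}(\det\transpose{y}Ay)^{\alpha}\,dy$; the polar decomposition then produces exactly $c_{n,d}\,J_{\alpha}(A)\cdot \pi^{-d(\alpha+n/2)+d(d-1)/4}\Gamma_d(\alpha+\tfrac{n}{2})$, while a direct polar decomposition on the right-hand integral gives $c_{n,d}\,J_{\alpha}(A)\cdot \pi^{d\alpha+d(d-1)/4}\Gamma_d(-\alpha)$.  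The common factor $c_{n,d}\,J_{\alpha}(A)$ cancels and the ratio is precisely $\pi^{-2d(\alpha+n/4)}\Gamma_d(\alpha+\tfrac{n}{2})/\Gamma_d(-\alpha)$, as claimed.  (It is worth making the substitution $y\mapsto A^{1/2}y$ explicit: if you apply the polar decomposition to the left side \emph{before} this substitution you obtain the different Stiefel integral $\int(\det\transpose{u}A^{-1}u)^{-(\alpha+n/2)}d\sigma(u)$, and the match with $J_{\alpha}(A)$ is then no longer visible.)

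One small point to tighten: the linear span of the Gaussians $\psi_A$ is not dense in all of $\SchwarzSpace(E)$, since every $\psi_A$ is right $\OO(d)$-invariant.  This is harmless here because both distributions in the identity are $\OO(n)\times\OO(d)$-invariant and homogeneous of the same degree; for generic $\alpha$ such a distribution is unique up to scalar, so the Gaussian computation already suffices to pin down the constant, and analytic continuation does the rest.
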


\begin{remark}
Note that our normalization of the Fourier transform is different from Clerc's.  
Also we emphasize that $ Q(y) = \transpose{y} y $ is the quadratic map 
associated to the Jordan algebra $ \VV_d = \Sym_d(\R) $ of smaller size. 

The formula in the above lemma is the zeta integral of PV 
$ (\GL_n(\C), \Mat_{n,d}(\C)) $, and in this sense, 
M.~Sato gave it as an `exercise' (or an example of such formulas) 
in his fundamental paper \cite{Sato.M.1970.PV} on the theory of PV.  
On the other hand, Clerc \cite{Clerc.2002} studied representations of Euclidean Jordan algebras in general, 
and he established a formula for Fourier transform of 
the determinant of quadratic maps associated to the representations in a broader context.
\end{remark}

From this lemma, we continue 
\begin{align*}
I_1(z) &= 
\int_E \widehat{\psi}(y) (\det \transpose{y}y)^{s_2} dy 
\\
&= \pi^{- 2 d (s_2 + \frac{n}{4})}  \dfrac{\Gamma_d(s_2 + \frac{n}{2})}{\Gamma_d(- s_2)} 
\int_E \psi(x) (\det \transpose{x} x)^{- s_2 - \frac{n}{2}} \, dx 
\\
&= (\text{$ \Gamma $-factor}) 
\int_E (L_g \varphi_2)(x) (\det \transpose{x} x)^{- s_2 - \frac{n}{2}} \, dx 
\qquad (\text{replacing $ x $ by $ g x $})
\\
&= (\text{$ \Gamma $-factor}) |\det g|^d 
\int_E \varphi_2(x) (\det \transpose{x} \transpose{g} g x)^{- s_2 - \frac{n}{2}} \, dx 
\\
&= (\text{$ \Gamma $-factor}) (\det z)^{\frac{d}{2}} 
\int_E \varphi_2(x) (\det \transpose{x} z x)^{- s_2 - \frac{n}{2}} \, dx .
\end{align*}
Then the formula \eqref{eqn:FTK.eq1} becomes 
\begin{align}
&
\int_{\WW} \widehat{\Kernelpcone_s}(\wtilde) 
\varphi(\wtilde) d\wtilde 
\notag
\\
&= \pi^{- 2 d (s_2 + \frac{n}{4})}  \dfrac{\Gamma_d(s_2 + \frac{n}{2})}{\Gamma_d(- s_2)} 
\int_{\EE} \varphi_2(x) \, dx 
\int_{\Omega} \widehat{\varphi_1}(z) (\det z)^{s_1 + s_2 + \frac{d}{2}} (\det \transpose{x} z x)^{- s_2 - \frac{n}{2}} dz .
\label{eqn:FTK.eq2}
\end{align}

\subsection{Laplace transform over the positive cone and the gamma constants}

To compute the integral in \eqref{eqn:FTK.eq2} with respect to $ z $, we put a convergence factor 
$ e^{- \trace z v^2} \; (v \in \Omega) $ 
and then take limit $ v \downarrow 0 $.  Thus, we consider an integral below 
($ \alpha := s_1 + s_2 + d/2, \;\; \beta := - s_2 - n /2 $).
\begin{align*}
I_2(v) 
&= 
\int_{\Omega} e^{- \trace z v^2}  
\widehat{\varphi_1}(z) (\det z)^{\alpha} (\det \transpose{x} z x)^{\beta} dz 
\\
&= 
\int_{\Omega} e^{- \trace z v^2}  
\Bigl\{ \int_{V} \varphi_1(w) e^{- 2 \pi i \trace z w} dw \Bigr\} 
(\det z)^{\alpha} (\det \transpose{x} z x)^{\beta} dz 
\\
&= 
\int_{V} \varphi_1(w) dw 
\int_{\Omega} 
e^{- \trace z (v^2 + 2 \pi i w)} 
(\det z)^{\alpha} (\det \transpose{x} z x)^{\beta} dz 
\end{align*}
Since the last integral over $ z $ is analytic in the variable 
$ v^2 + 2 \pi i w \in \Omega + i V \subset \VV_{\C} $, 
we first calculate it when $ w = 0 $, then make an analytic continuation.  
Thus we are to consider 
\begin{align*}
I_2'(v) 
&=
\int_{\Omega} 
e^{- \trace \transpose{v} z v}
(\det z)^{\alpha} (\det \transpose{x} z x)^{\beta} dz .
\\
\intertext{
Substituting $ \transpose{v} z v $ by $ z $, we obtain }
&= (\det v)^{- 2 (\alpha + \frac{n + 1}{2})} 
\int_{\Omega} 
e^{- \trace z}
(\det z)^{\alpha} (\det \transpose{(v^{-1} x)} z (v^{-1} x))^{\beta} dz 
\end{align*}
The following lemma is easy to derive.

\begin{lemma}
For $ x \in E $, put 
\begin{equation*}
\Phi(x) = 
\int_{\Omega} 
e^{- \trace z}
(\det z)^{\alpha} (\det \transpose{x} z x)^{\beta} dz .
\end{equation*}
Then it satisfies 
{\upshape (1)}\ 
$ \Phi(x a) = (\det a)^{2 \beta} \Phi(x) \;\; (a \in \GL_d(\R)) $; and 
{\upshape (2)}\ 
$ \Phi(u x) = \Phi(x) \;\; (u \in \OO_n(\R)) $. 
\end{lemma}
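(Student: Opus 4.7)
The plan is straightforward: both parts of the lemma reduce to elementary changes of variables in the defining integral for $ \Phi(x) $. I expect no serious obstacle, since this is really a bookkeeping step that encodes the covariance of $ \Phi $ under the $ \OO_n(\R) \times \GL_d(\R) $ action on $ E = \Mat_{n,d}(\R) $, and it will be used subsequently to reduce the evaluation of $ \Phi $ to a one-variable problem.

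For part (1), no substitution on the integration variable $ z $ is required. Replacing $ x $ by $ xa $ affects only the factor $ (\det \transpose{x} z x)^{\beta} $, and multiplicativity of the determinant gives
\begin{equation*}
\det\bigl(\transpose{(xa)}\, z\, (xa)\bigr) \;=\; (\det a)^{2}\,\det\bigl(\transpose{x} z x\bigr).
\end{equation*}
The scalar $ (\det a)^{2\beta} $ then pulls out of the integral, yielding $ \Phi(xa) = (\det a)^{2\beta}\,\Phi(x) $.

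For part (2), I would apply the substitution $ z = u z' \transpose{u} $ in the $ z $-integral, which is legitimate because $ u \in \OO_n(\R) $ satisfies $ u^{-1} = \transpose{u} $. Three routine checks are needed: (a) the map $ z' \mapsto u z' \transpose{u} $ is a bijection of $ \Omega $ onto itself, since orthogonal conjugation preserves positivity of symmetric matrices; (b) its Jacobian on $ \Sym_n(\R) $ equals $ |\det u|^{\,n+1} = 1 $, by the classical formula for the Jacobian of the twisted action $ z \mapsto g z \transpose{g} $; (c) $ \trace z = \trace z' $ by cyclic invariance of the trace together with $ \transpose{u} u = I $, and $ \det z = (\det u)^{2}\det z' = \det z' $. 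After substitution one computes $ \transpose{(ux)} z (ux) = \transpose{x}\,\transpose{u}\cdot u z' \transpose{u}\cdot u x = \transpose{x} z' x $, so the full integrand is invariant and $ \Phi(ux) = \Phi(x) $. The only even mildly nontrivial ingredient is the Jacobian computation in (b), which is a standard invariance property of Lebesgue measure on $ \Sym_n(\R) $ that can simply be cited rather than re-derived.
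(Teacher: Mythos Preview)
Your proof is correct; both parts are exactly the elementary determinant and change-of-variable computations one would expect. The paper itself gives no proof of this lemma, merely stating that it ``is easy to derive,'' so your argument fills in precisely what the authors left to the reader and is the natural approach.
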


Note that any $ x \in E = \Mat_{n,d}(\R) $ can be decomposed into 
$ x = u \, \II_d \, a \;\; (u \in \OO_n(\R), a \in \GL_d(\R)) $, 
where 
$ \II_d = \vectwo{1_d}{0} \in \EE $.  
Therefore from the lemma, we get 
\begin{equation*}
\Phi(x) = \Phi(u \, \II_d \, a) = (\det a)^{2 \beta} \Phi(\II_d) 
= (\det \transpose{x} x)^{\beta} \Phi(\II_d) .
\end{equation*}
We call $ \gamma(\alpha, \beta) := \Phi(\II_d) $ \emph{gamma constant}.  
Since 
$ \det(\transpose{\II_d} z \II_d) = \Delta_d(z) $ ($ d $-th principal minor), 
we get 
\begin{equation*}
\gamma(\alpha, \beta) 
= 
\int_{\Omega} 
e^{- \trace z}
(\det z)^{\alpha} \Delta_d(z)^{\beta} dz 
= 
\int_{\Omega} 
e^{- \trace z}
\Delta_n(z)^{\alpha} \Delta_d(z)^{\beta} dz , 
\end{equation*}
which can be expressed in terms of Gindikin's gamma function.

\begin{lemma}[{\cite[Theorem~VII.1.1]{Faraut.Koranyi.1994}}]
The gamma constant is given by 
\begin{equation*}
\gamma(\alpha, \beta) = (2 \pi)^{\frac{n(n-1)}{4}} \Gamma_d(\alpha + \beta + \dfrac{n + 1}{2}) 
\Gamma_{n - d}(\alpha + \dfrac{n - d + 1}{2}) .
\end{equation*}
\end{lemma}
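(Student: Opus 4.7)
The plan is to reduce the integral $\gamma(\alpha,\beta)$ to a direct application of Gindikin's multivariate gamma formula on the symmetric cone $\Omega = \Sym_n^+(\R)$, which is Theorem VII.1.1 of \cite{Faraut.Koranyi.1994}. Recall that for a tuple $\mathbf{s} = (s_1,\dots,s_n) \in \C^n$, setting $s_{n+1} := 0$, the generalized power function on $\Omega$ is
\[
\Delta_{\mathbf{s}}(z) = \prod_{i=1}^{n} \Delta_i(z)^{s_i - s_{i+1}},
\]
and Gindikin's formula asserts, in the range of absolute convergence,
\[
\int_{\Omega} e^{-\trace z}\, \Delta_{\mathbf{s}}(z)\, \Delta_n(z)^{-(n+1)/2}\, dz = (2\pi)^{n(n-1)/4} \prod_{i=1}^{n} \Gamma\Bigl(s_i - \tfrac{i-1}{2}\Bigr).
\]

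My first step would be to match the integrand of $\gamma(\alpha,\beta)$ with the left-hand side above by choosing
\[
s_i = \alpha + \beta + \tfrac{n+1}{2}\quad (1 \le i \le d), \qquad s_i = \alpha + \tfrac{n+1}{2}\quad (d < i \le n).
\]
With this choice, all the differences $s_i - s_{i+1}$ vanish except at $i = d$ (where it equals $\beta$) and at $i = n$ (where it equals $\alpha + (n+1)/2$), so $\Delta_{\mathbf{s}}(z) = \Delta_d(z)^{\beta}\, \Delta_n(z)^{\alpha + (n+1)/2}$. The extra factor $\Delta_n(z)^{-(n+1)/2}$ in Gindikin's formula therefore absorbs precisely this shift, and the integrand on the left-hand side becomes exactly $e^{-\trace z}\, \Delta_n(z)^{\alpha}\, \Delta_d(z)^{\beta}$, as needed.

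It then remains to group the resulting product of gamma values into the two advertised multi-gamma factors. By the definition \eqref{eqn:multiple.Gamma}, the first $d$ factors give
\[
\prod_{i=1}^{d} \Gamma\Bigl(\alpha + \beta + \tfrac{n+1}{2} - \tfrac{i-1}{2}\Bigr) = \Gamma_d\Bigl(\alpha + \beta + \tfrac{n+1}{2}\Bigr),
\]
and the remaining $n-d$ factors, under the reindexing $j = i - d$, assemble into $\Gamma_{n-d}(\alpha + (n-d+1)/2)$. Multiplying by the constant $(2\pi)^{n(n-1)/4}$ yields the claimed formula. The main obstacle is essentially bookkeeping: one must carefully reconcile the conventions of \cite{Faraut.Koranyi.1994} for the generalized power function (especially the shift $\Delta_n(z)^{-(n+1)/2}$, which encodes the invariant measure on $\Omega$) with the plain Lebesgue-measure integral defining $\gamma(\alpha,\beta)$. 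Once the identity is established on its domain of absolute convergence, it extends meromorphically in $(\alpha,\beta) \in \C^2$ since Gindikin's formula itself does.
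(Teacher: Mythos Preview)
Your proposal is correct and is exactly the approach the paper intends: the paper does not give a proof but simply cites \cite[Theorem~VII.1.1]{Faraut.Koranyi.1994}, and you have correctly spelled out how Gindikin's formula for $\Sym_n^+(\R)$ specializes (with your choice of $\mathbf{s}$) to the integral defining $\gamma(\alpha,\beta)$ and yields the stated product of $\Gamma_d$ and $\Gamma_{n-d}$.
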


Now, summing up all the relevant information, we get 
\begin{align*}
I_2'(v) 
&= (\det v)^{- 2 (\alpha + \frac{n + 1}{2})} (\det \transpose{x} v^{-2} x)^{\beta} 
(2 \pi)^{\frac{n(n-1)}{4}} \Gamma_d(\alpha + \beta + \dfrac{n + 1}{2}) 
\Gamma_{n - d}(\alpha + \dfrac{n - d + 1}{2})
\\
&= 
(2 \pi)^{\frac{n(n-1)}{4}} 
P_1(v^2)^{- (\alpha + \beta + \frac{n + 1}{2})} P_2(v^2, x)^{\beta}
\Gamma_d(\alpha + \beta + \dfrac{n + 1}{2}) 
\Gamma_{n - d}(\alpha + \dfrac{n - d + 1}{2})
\\
\intertext{and}
I_2(v) &= 
(2 \pi)^{\frac{n(n-1)}{4}} 
\Gamma_d(\alpha + \beta + \dfrac{n + 1}{2}) 
\Gamma_{n - d}(\alpha + \dfrac{n - d + 1}{2})
\\
& \qquad \qquad
\int_V \varphi_1(w) 
P_1(v^2 + 2 \pi i w)^{- (\alpha + \beta + \frac{n + 1}{2})} P_2(v^2 + 2 \pi i w, x)^{\beta} dw .
\end{align*}
After substituting $ \alpha = s_1 + s_2 + d/2, \; \beta = - s_2 - n /2 $ and comparing with 
\eqref{eqn:FTK.eq2},
we get Theorem~\ref{thm:FT.Kernelp}.
The proof is completed.

\section{Functional equation for the positive cone}

Here we will rewrite Theorem~\ref{thm:FT.Kernelp} to express the Fourier transform 
as a linear sum of distributions supported in open orbits.  

\subsection{Open orbits and the restriction of the boundary value distribution}\label{subsec:open.orbits}

A classification of open $ G $-orbits is already given in Lemma~\ref{lemma:open.orbits}.  
Thus, an open orbit $ \calorbit \subset \VV \oplus \EE = \WW $ is 
characterized by the signature of $ z $ and that of $ z^{-1} \restrict{}_{[y]} $, 
where $ [y] = \Image y $.
Namely, for quadruples of nonnegative integers $ \rho = (p, q; p', q') $ which satisfies 
$ p + q = n , \; p' + q' = d, \; 0 \leq p' \leq p, \; 0 \leq q' \leq q $, 
the set of all $ \ztilde = (z, y) \in W $ with 
$ \sgn z = (p, q) $ and 
$ \sgn (\transpose{y} z^{-1} y) = (p', q') $ make up an open orbit, 
which we will denote by $ \calorbit_{\rho} $.  
Note that $ \sgn (\transpose{y} z^{-1} y) $ is the same as that of $ z^{-1} \restrict{}_{[y]} $.  

In \S~\ref{section:FT.main.results} we introduced 
the boundary value distribution $ \Xi_s(z, y) $ which is real analytic in $ \ztilde = (z, y) $ 
and meromorphic in $ s = (s_1, s_2) $.  
On the other hand, we have a tempered distribution supported in each open orbit 
$ \calorbit_{\rho} $:
\begin{equation}\label{eqn:distribution.on.open.orbit}
\Kernelrho_s(\ztilde) 
= \begin{cases}
|{\det z}|^{s_1} \, \bigl|{\det \mattwo{z}{y}{\transpose{y}}{0}}\bigr|^{s_2} 
& \ztilde \in \calorbit_{\rho}
\\
0 
&
\text{otherwise.}
\end{cases}
\end{equation}
Note that,  
since $ \tildeOmega = \calorbit_{(n,0;d,0)} $, 
$ \Kernelpcone_s = \Kernelrho_s $ for $ \rho = (n,0;d,0) $.  

\begin{lemma}\label{lemma:restriction.hyperfunction.Xi}
For $ \rho = (p, q; p', q') $, 
put 
$ u_{\rho}(s) = (2 \pi)^{n s_1 + (n - d) s_2} \exp \frac{\pi i}{2} 
\bigl( (p - q) (s_1 + s_2) - (p' - q') s_2 \bigr) $.  
If $ \Re s_1, \Re s_2 > 0 $, 
we have $ \Xi_s \restrict_{\calorbit_{\rho}} = u_{\rho}(s) \Kernelrho_s $ so that 
it holds 
\begin{equation}
\Xi_s = \sum\nolimits_{\rho} u_{\rho}(s) \Kernelrho_s .
\end{equation}
By analytic continuation, the equation is valid for any $ s \in \C^2 $ 
if both sides are interpreted as meromorphic functions.
\end{lemma}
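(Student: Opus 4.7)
The plan is to verify the identity pointwise on each open orbit $\calorbit_\rho$; since $\bigcup_\rho \calorbit_\rho$ is open and dense in $\WW$, and $\Xi_s$ is locally integrable there for $\Re s_1, \Re s_2 > 0$, this yields the distributional identity, and meromorphic continuation then extends it to all $s \in \C^2$. Both $\Xi_s$ and each $\Kernelrho_s$ transform under the same character $\chi_{P_1}^{s_1} \chi_{P_2}^{s_2}$ of $\GG$, so by Sylvester's law of inertia it suffices to verify the identity at the representative $w = D_{p,q} := \diag(I_p, -I_q)$ of $\calorbit_\rho$.

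Approach the limit along the convenient path $v = \epsilon I \in \Omega$ as $\epsilon \downarrow 0$. Using the factorization $P_2(z, y) = \det z \cdot \det(\transpose{y} z^{-1} y)$, valid on the tube, we obtain
\[
P_1(v + 2\pi i w)^{s_1} P_2(v + 2\pi i w, x)^{s_2} = \det(v + 2\pi i w)^{s_1 + s_2} \cdot \det\bigl(\transpose{x}(v + 2\pi i w)^{-1} x\bigr)^{s_2}.
\]
At $w = D_{p,q}$ the first factor equals $(\epsilon + 2\pi i)^{p(s_1 + s_2)} (\epsilon - 2\pi i)^{q(s_1 + s_2)}$, whose $\epsilon \downarrow 0$ limit in the principal branch (the one extended from $\Omega$ where it is positive real) is $(2\pi)^{n(s_1 + s_2)} e^{i\pi(p - q)(s_1 + s_2)/2}$.

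For the second factor, partition $x = \vectwo{x_1}{x_2}$ with $x_1 \in \Mat_{p,d}(\R)$, $x_2 \in \Mat_{q,d}(\R)$; then
\[
\transpose{x}(v + 2\pi i w)^{-1} x = \frac{\transpose{x_1} x_1}{\epsilon + 2\pi i} + \frac{\transpose{x_2} x_2}{\epsilon - 2\pi i}.
\]
Its real part equals $\frac{\epsilon}{\epsilon^2 + 4\pi^2} \transpose{x} x$, which is positive definite for every $\epsilon > 0$ by the full-rank hypothesis characterizing open orbits. Hence every eigenvalue of this complex symmetric matrix has strictly positive real part along the path, so the argument of its determinant stays in $(-d\pi/2, d\pi/2)$ and admits no winding around $0$. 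As $\epsilon \downarrow 0$ the matrix converges to $\frac{1}{2\pi i}(\transpose{x_1} x_1 - \transpose{x_2} x_2) = \frac{1}{2\pi i} \transpose{x} w^{-1} x$, whose eigenvalues are pure imaginary: the $p'$ positive eigenvalues of $\transpose{x} w^{-1} x$ contribute argument $-\pi/2$ and the $q'$ negative ones $+\pi/2$. The continuous argument of $\det$ therefore converges to $-\pi(p' - q')/2$, giving
\[
\det\bigl(\transpose{x}(v + 2\pi i w)^{-1} x\bigr)^{s_2} \longrightarrow (2\pi)^{-d s_2} |\det(\transpose{x} w^{-1} x)|^{s_2} e^{-i\pi(p' - q') s_2/2}.
\]

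Multiplying the two limits, the powers of $2\pi$ combine to $(2\pi)^{n s_1 + (n-d) s_2}$; using $|P_2(w, x)| = |\det w| \cdot |\det(\transpose{x} w^{-1} x)|$ the moduli regroup as $|P_1|^{s_1} |P_2|^{s_2} = \Kernelrho_s(w, x)$, and the phases collect into $\exp \frac{i\pi}{2}\bigl((p - q)(s_1 + s_2) - (p' - q') s_2\bigr)$; this is exactly $u_\rho(s) \Kernelrho_s(w, x)$. Summing over $\rho$ yields the claimed distributional identity on $\bigcup_\rho \calorbit_\rho$ (hence on $\WW$ by density), and analytic continuation handles general $s$. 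The main obstacle is fixing the phase $e^{-i\pi(p' - q') s_2/2}$ unambiguously; the positive-definite real part of the intermediate $d \times d$ matrix is what rules out spurious winding factors like $e^{\pm 2\pi i s_2}$ and pins down the correct principal branch.
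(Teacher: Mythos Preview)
Your proof is correct and follows essentially the same strategy as the paper's: exploit $G$-equivariance (both $\Xi_s$ and $\Kernelrho_s$ transform by the same positive real character $(\det g)^{2s_1}(\det g\,\det h)^{2s_2}$) to reduce to a representative, then evaluate the boundary limit along the ray $v=\epsilon I\downarrow 0$. The only difference is cosmetic: the paper reduces \emph{both} coordinates to a single base point $(z_0,y_0)$ with $y_0=\Bigl(\begin{smallmatrix}1_{p'}&0\\0&0\\0&1_{q'}\end{smallmatrix}\Bigr)$, so that $\transpose{y_0}(v_0+2\pi i z_0)^{-1}y_0$ is already diagonal and the phase $(t+2\pi i)^{-p'}(t-2\pi i)^{-q'}$ can be read off directly; you instead keep $x$ generic and recover the same phase by tracking the eigenvalues of the complex symmetric matrix $\transpose{x}(\epsilon I+2\pi i D_{p,q})^{-1}x$ through the right half plane. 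Your observation that its Hermitian part $\tfrac{\epsilon}{\epsilon^2+4\pi^2}\transpose{x}x$ is positive definite is exactly what guarantees no winding; the paper sidesteps this by working at a point where the matrix is diagonal from the start.
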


\begin{proof}
The proof is straightforward, but we give it shortly for the convenience of the readers.  
Note that each orbit $ \calorbit_{\rho} $ has a representative 
$ (z_0, y_0) = ( \mattwo{1_p}{0}{0}{-1_q}, \Biggl( \begin{array}{@{}c@{\,}c@{}} 
1_{p'} & 0 \\[-.1ex] 0 & 0 \\[-.3ex] 0 & 1_{q'}
\end{array}
\Biggr) ) $.   
Therefore a general element in the orbit can be written as 
$ (z, y) = (g, h) \cdot (z_0, y_0) = (g z_0 \transpose{g}, g y_0 \transpose{h}) $.  
Thus for $ v \in \Omega $ we compute 
\begin{equation}\label{eqn:compute.hyperfunction.Xi}
\begin{aligned}
\Xi_s(\ztilde) 
&= \lim_{v \downarrow 0} \det(v + 2 \pi i z)^{s_1} 
\Bigl( (-1)^d \det \mattwo{v + 2 \pi i z}{y}{\transpose{y}}{0} \Bigr)^{s_2}
\\
&= \lim_{v \downarrow 0} \det(v + 2 \pi i z)^{s_1 + s_2} 
\det (\transpose{y} (v + 2 \pi i z)^{-1} y)^{s_2}
\\
&= \lim_{v \downarrow 0} \det (g \transpose{g})^{s_1 + s_2} \det (h \transpose{h})^{s_2} 
\det(v_0 + 2 \pi i z_0)^{s_1 + s_2} 
\det (\transpose{y_0} (v_0 + 2 \pi i z_0)^{-1} y_0)^{s_2}, 
\end{aligned}
\end{equation}
where $ v_0 = g^{-1} v \transpose{g}^{-1} $.  
Notice that $ \det g \transpose{g} = |{\det z}| $ and 
$ \det h \transpose{h} =  |{\det \transpose{y} z^{-1} y}| $.  

Since by the assumption 
$ \Re s_1, \Re s_2 > 0 $ holds and 
we can take the limit $ v \downarrow 0$ in any way.  
So we choose $ v_0 = t 1_n \;\; (t > 0) $ as a scalar matrix and take the limit $ t \downarrow 0 $.  
Then the last formula in \eqref{eqn:compute.hyperfunction.Xi} becomes 
\begin{align*}
&= 
|{\det z}|^{s_1 + s_2}
|{\det \transpose{y} z^{-1} y}|^{s_2} 
\lim_{t \downarrow 0}
(t + 2 \pi i)^{p (s_1 + s_2)} (t - 2 \pi i)^{q (s_1 + s_2)} 
(t + 2 \pi i)^{- p' s_2} (t - 2 \pi i)^{- q' s_2} 
\\
&= 
|{\det z}|^{s_1 + s_2}
|{\det \transpose{y} z^{-1} y}|^{s_2} 
(2 \pi)^{n s_1 + (n - d) s_2} 
\exp \frac{\pi i}{2} 
\bigl( (p - q) (s_1 + s_2) - (p' - q') s_2 \bigr)
\end{align*}
which gives $ u_{\rho}(s) $.  
\end{proof}

\subsection{Functional equation for the enhanced positive cone}

Summarizing Theorem~\ref{thm:FT.Kernelp} and Lemma~\ref{lemma:restriction.hyperfunction.Xi}, we get

\begin{theorem}\label{thm:FEq.on.open.orbits}
The Fourier transform of the integral kernel $ \Kernelpcone_s $ 
satisfies the following functional equation:
\begin{equation*}
\widehat{\Kernelpcone_s} 
=
\dfrac{c(s) \; \Gamma_{\tildeOmega}(s)}{\Gamma_d(s_2 + \dfrac{d + 1}{2}) \, \Gamma_d(- s_2)} 
\sum_{\rho} 
u_{\rho}(-s - \tfrac{1}{2}(d + 1, n)) \;
\Kernelrho_{-s - \frac{1}{2}(d + 1, n)} ,
\end{equation*}
where $ \rho =(p, q; p', q') $ moves over all the possible parameters of open orbits and 
$ u_{\rho}(s) $ is given in Lemma~\ref{lemma:restriction.hyperfunction.Xi}.
The rest of the notation is the same as those in Theorem~\ref{thm:FT.Kernelp}.
\end{theorem}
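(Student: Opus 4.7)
The plan is to derive Theorem~\ref{thm:FEq.on.open.orbits} directly by combining Theorem~\ref{thm:FT.Kernelp} with Lemma~\ref{lemma:restriction.hyperfunction.Xi}; the two results already do essentially all the work, so the task is one of careful bookkeeping of gamma factors rather than a genuinely new calculation.

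First, I would clear denominators in Theorem~\ref{thm:FT.Kernelp} to rewrite it as
\begin{equation*}
\widehat{\Kernelpcone_s}
= \dfrac{c(s) \, \Gamma_d(s_1 + \tfrac{d+1}{2}) \, \Gamma_d(s_2 + \tfrac{n}{2}) \, \Gamma_{n-d}(s_1 + s_2 + \tfrac{n+1}{2})}{\Gamma_d(-s_2)} \, \Xi_{-(s_1 + \frac{d+1}{2}),\, -(s_2 + \frac{n}{2})} .
\end{equation*}
Inspecting the definition \eqref{eqn:Gamma.factor.EPC} of $\Gamma_{\tildeOmega}(s)$, the product of three gamma functions in the numerator above is precisely $\Gamma_{\tildeOmega}(s)/\Gamma_d(s_2 + \tfrac{d+1}{2})$, so the scalar coefficient coincides with the one stated in Theorem~\ref{thm:FEq.on.open.orbits}.

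Second, I would apply Lemma~\ref{lemma:restriction.hyperfunction.Xi} at the shifted parameter
$-s - \tfrac{1}{2}(d+1,n) = \bigl(-(s_1 + \tfrac{d+1}{2}),\, -(s_2 + \tfrac{n}{2})\bigr)$.
The lemma expresses $\Xi_{-s - \frac{1}{2}(d+1,n)}$ as the explicit linear combination
$\sum_{\rho} u_{\rho}(-s - \tfrac{1}{2}(d+1,n)) \, \Kernelrho_{-s - \frac{1}{2}(d+1,n)}$
of tempered distributions supported in the closures of the open orbits $\calorbit_\rho$. Inserting this decomposition into the formula of the first paragraph yields exactly the statement of Theorem~\ref{thm:FEq.on.open.orbits}.

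Finally, one must justify that the identity holds for all $s \in \C^2$ and not only on a convergence domain. The decomposition in Lemma~\ref{lemma:restriction.hyperfunction.Xi} is initially derived under $\Re s_1, \Re s_2 > 0$, but the remark accompanying that lemma extends it by meromorphic continuation; similarly, $\widehat{\Kernelpcone_s}$ is a meromorphic family of tempered distributions by Theorem~\ref{thm:analytic.continuation} together with continuity of the Fourier transform on $\SchwarzSpace(\WW)$. Both sides of the claimed equality are therefore meromorphic in $s$, and since they agree on an open subset they agree everywhere. There is no serious obstacle to this proof beyond verifying the algebraic identification of gamma factors and keeping track of the phase $u_\rho$; I would expect the bookkeeping of the shift $s \mapsto -s - \tfrac{1}{2}(d+1,n)$ inside $u_\rho$ to be the only place requiring real attention.
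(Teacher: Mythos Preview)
Your proposal is correct and follows exactly the paper's approach: the paper simply states that the theorem is obtained by ``summarizing Theorem~\ref{thm:FT.Kernelp} and Lemma~\ref{lemma:restriction.hyperfunction.Xi},'' and your argument spells out precisely that combination, including the identification of the gamma factors via \eqref{eqn:Gamma.factor.EPC} and the meromorphic continuation to all of $\C^2$.
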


\section*{Acknowledgements}
The first author (K.\,N.)  is supported by JSPS KAKENHI Grant Numbers \#{16K05070}.  
He also thanks for warm hospitality during his visits to 
the Department of Mathematics, Aarhus University.


\section*{References}

\medskip


\skipover{
\def\cftil#1{\ifmmode\setbox7\hbox{$\accent"5E#1$}\else
  \setbox7\hbox{\accent"5E#1}\penalty 10000\relax\fi\raise 1\ht7
  \hbox{\lower1.15ex\hbox to 1\wd7{\hss\accent"7E\hss}}\penalty 10000
  \hskip-1\wd7\penalty 10000\box7} \def\cprime{$'$} \def\cprime{$'$}
  \def\Dbar{\leavevmode\lower.6ex\hbox to 0pt{\hskip-.23ex \accent"16\hss}D}
\providecommand{\newblock}{}

}

\def\cftil#1{\ifmmode\setbox7\hbox{$\accent"5E#1$}\else
  \setbox7\hbox{\accent"5E#1}\penalty 10000\relax\fi\raise 1\ht7
  \hbox{\lower1.15ex\hbox to 1\wd7{\hss\accent"7E\hss}}\penalty 10000
  \hskip-1\wd7\penalty 10000\box7} \def\cprime{$'$} \def\cprime{$'$}
  \def\Dbar{\leavevmode\lower.6ex\hbox to 0pt{\hskip-.23ex \accent"16\hss}D}
\providecommand{\bysame}{\leavevmode\hbox to3em{\hrulefill}\thinspace}
\providecommand{\MR}{\relax\ifhmode\unskip\space\fi MR }
\providecommand{\MRhref}[2]{%
  \href{http://www.ams.org/mathscinet-getitem?mr=#1}{#2}
}
\providecommand{\href}[2]{#2}
\renewcommand{\MR}[1]{}

\end{document}